\theoremstyle{plain}
\newtheorem{thm}{Theorem}
\newtheorem{lem}[thm]{Lemma}
\theoremstyle{definition}
\newtheorem{definition}[thm]{Definition}
\DeclareMathOperator{\Rips}{Rips}
\DeclareMathOperator{\Homo}{H}
\title{A subset of Euclidean space with large Vietoris-Rips homology}
\author{Jean-Marie Droz}
\date{\today}
\begin{document}

\maketitle


\section{Introduction}

In this article, we construct a compact subset  $K$ of the four dimensional Euclidean space with the following property: For all values of the parameter $a$ in an interval, the Vietoris-Rips complex $\Rips(K,a)$ has first homology $\Homo_1(\Rips(K,a))$ uncountable. 
 To fix notation, we state the definition of the Vietoris-Rips complex. This answers a question that arose in work on persistent homology published in \cite{chazal} in a discussion between authors of \cite{chazal} and S. Smale.

\section{Acknowledgements}
We would like to thank Fr\'ed\'eric Chazal for introducing us to the question treated here and for helpful discussions.

\section{Main theorem}

\begin{definition}
Given a pseudometric space $(M,d)$ and a number $a\in \mathbb{R}^+$, we define the {\it Vietoris-Rips complex} of $(M,d)$ at $a$, $\Rips(M,a)$, as the simplicial complex with, as $k$-simplexes (for $k\in \mathbb{N}$), the sets of $k+1$ distinct points of $M$ with diameter $\leq a$.
\end{definition}

\begin{definition}
A function $f:A \rightarrow B$ between pseudometric spaces $(A,\delta_A)$ and $(B,\delta_B)$ is {\it $C$-close-expanding} if $\forall x,y\; \delta_B(f(x),f(y))\geq C\cdot\sqrt{\delta_A(x,y)}$. 
\end{definition}

\begin{definition}
We define the distance $\delta_3: \{0,1,2\}^\mathbb{N}\times\{0,1,2\}^\mathbb{N}\rightarrow \mathbb{R}$ between two ternary sequences as $\delta_3((s_i)_{i\geq 0},(t_j)_{j\geq 0})=3^{-\min \{k| s_k\neq t_k\}}$. We call {\it ternary ultrametric distance} the function $\delta_3$.
\end{definition}


\begin{definition}
Two pseudometrics $d_1$ and $d_2$ over a space $S$ are called equivalent when there are real constants $c_1,c_2>0$ such that $\forall x,y\in S$ $c_1\cdot d_1(x,y)\geq d_2(x,y)\geq c_2\cdot d_1(x,y)$
\end{definition}

We denote by $I_{many}$ the product of the interval $[0,1]$ with a set $D$ of cardinality $2^{\aleph_0}$. We equip $I_{many}$ with the pseudometric induced by the absolute value metric on the interval. 
 
\begin{lem}
\label{existenceCloseExpanding}
There is a $\frac{1}{243}$-close-expanding injective function  $e:I_{many}\rightarrow [0,1]^3$, where the unit cube is equipped with the Euclidean metric. 
\end{lem}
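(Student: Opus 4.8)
The plan is to build $e$ as a composition that first replaces the interval by ternary sequences — where distances form an ultrametric and carries disappear — and then spreads those sequences into the cube by a square-root (inverse space-filling) placement, using the slack coming from the third dimension to host the uncountably many copies indexed by $D$.

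\textbf{Reduction to the ternary ultrametric.} First I would dispose of the two soft ingredients. Let $\sigma:[0,1]\to\{0,1,2\}^{\mathbb{N}}$ be a choice of ternary expansion; it is injective, and if $x,y$ agree in their first $k$ ternary digits then $|x-y|\le 3^{-k}$, which yields the one-sided comparison $|x-y|\le 3\,\delta_3(\sigma(x),\sigma(y))$. (The reverse inequality fails precisely because of carries, e.g. $0.1\overline{0}$ versus $0.0\overline{2}$, which is why I use only this direction.) Since $|D|=2^{\aleph_0}$, fix any injection $\gamma:D\to\{0,1,2\}^{\mathbb{N}}$. Because the pseudometric on $I_{many}$ ignores the $D$-coordinate, it suffices to produce an injective map $\Phi:\{0,1,2\}^{\mathbb{N}}\times\{0,1,2\}^{\mathbb{N}}\to[0,1]^3$ with
\[
\|\Phi(s,u)-\Phi(s',u')\|\ \ge\ C\,\sqrt{\delta_3(s,s')}\qquad\text{for all }s,s',u,u',
\]
for a constant $C\ge \sqrt3/243$: then $e(x,d)=\Phi(\sigma(x),\gamma(d))$ works, the factor $\sqrt3$ being absorbed from $|x-y|\le 3\,\delta_3$.

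\textbf{Core construction.} The quantity $\sqrt{\delta_3(s,s')}=3^{-k/2}$, where $k$ is the first index at which $s,s'$ differ, forces a genuine square-root spreading: two points agreeing to depth $k$ must still land roughly $3^{-k/2}$ apart, so about two input symbols have to be encoded per unit of Euclidean resolution. Concretely I would process the digits of $s$ in consecutive pairs and place, for each pair, a triple of base-$3$ digits into the $j$-th base-$3$ position of the three cube coordinates; the second sequence $u$, which carries no metric constraint and must only be separated for injectivity, is encoded at a strictly finer scale so that it never interferes with the separation coming from $s$. One then checks that if $s,s'$ first differ at position $k$, the corresponding coordinate triples first differ at position $\approx k/2$, whence $\|\Phi(s,u)-\Phi(s',u')\|\gtrsim 3^{-k/2}$, and that the nested choices converge, so $\Phi$ is well defined and injective (distinct $(s,u)$ eventually separate in some coordinate's expansion).

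\textbf{Main obstacle.} The delicate point — and the source of the lossy constant $\tfrac1{243}=3^{-5}$ — is again carries, now on the output side: a base-$3$ digit difference of $1$ can be cancelled by the tail, so a first difference at position $\approx k/2$ does \emph{not} automatically produce a Euclidean gap of order $3^{-k/2}$. The clean remedy, forcing every relevant digit difference to be $2$ (coordinates in $\{0,2\}$), runs into a counting obstruction: one must encode the $9$ symbol-pairs of $s$ robustly, but $\{0,2\}^3$ supplies only $8$ carry-free triples. Resolving this last unit of slack — either by treating the exceptional pair separately and bounding its carry, or by spending a bounded number of extra base-$3$ positions — is what degrades the constant from order $1/\sqrt3$ to $3^{-5}$. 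I expect this carry bookkeeping, rather than the overall architecture, to be the real work of the proof; the $D$-coordinate, by contrast, costs nothing, since injectivity of $\gamma$ together with the strict scale separation of the $u$-encoding makes the whole map injective for free.
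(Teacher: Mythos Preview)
Your overall architecture --- pass to ternary expansions, distribute consecutive pairs of $s$-digits across the three coordinates to obtain the square-root scaling, and encode $D$ separately for injectivity --- is exactly the skeleton of the paper's construction. Where you diverge is in the carry-handling, and there the paper's device is precisely the one you dismissed as irrelevant.

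You relegate the $D$-sequence to a ``strictly finer scale'' and treat it as a free injectivity tag; you then confront the carry problem head-on in the $s$-encoding, correctly spot the $9$-versus-$8$ obstruction to a pure $\{0,2\}$ scheme, and leave it unresolved. The paper instead encodes $D$ by a \emph{binary} sequence $(b_k)$ and interleaves it at the \emph{same} scale: the $i$-th output coordinate has ternary expansion $(t_{2i},t_{2i+1},b_0,\,t_{6+2i},t_{6+2i+1},b_1,\ldots)$, so that every digit in position $3k+2$ lies in $\{0,1\}$. This single stroke kills the carry problem --- no output expansion can contain three consecutive $2$'s, so the Euclidean metric and the ternary ultrametric are equivalent on the image with the fixed constant $3^{-4}$ --- and simultaneously delivers injectivity on the $D$-factor. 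There is no $9$-versus-$8$ issue because the $s$-digits are copied verbatim into full $\{0,1,2\}$ positions; the slack comes not from restricting their alphabet but from inserting a controlled small digit after each pair. Your alternative route (spend extra positions, or bound an exceptional carry) might be forced through, but it is not carried out here, and it misses that the $D$-coordinate is what does the real metric work in the paper's argument.
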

\begin{proof}
We consider points in $I_{many}$ as pairs of a ternary sequence and a binary sequence, the ternary sequence corresponding to the expansion in base $3$ of a number from the unit interval and the binary sequence to an element of $D$. If a number has two representations in base $3$ we simply choose one arbitrarily. In the following, we will often ignore the distinction between a number in the unit interval and the sequence representing it in base $3$.

For $i\in\{0,1,2\}$, let $e_i$ be the function $I_{many}\rightarrow [0,1]$ mapping $p=((t_0,t_1,\ldots),(b_0,b_1,\ldots))\in I_{many}$ to the number represented by the sequence $$(t_{2i},t_{2i+1},b_0, \ldots ,t_{6k+2i},t_{6k+2i+1},b_k,t_{6(k+1)+2i},t_{6(k+1)+2i+1},b_{(k+1)}, \ldots ).$$ We define the function $e$ as mapping an element $p\in I_{many}$ to $(e_0(p),e_1(p),e_2(p))\in [0,1]^3$.

It remains to prove that $e$ has the properties claimed. We begin by observing that in the images of $e_i$, the absolute value metric is equivalent to the natural ultrametric. This is implied by the fact that the representation in base $3$ of coordinates of points in the images cannot have a $2$ at places numbered $3k+2$ for $k\in \mathbb{N}$. (The image look like a product of asymmetric versions of the Cantor set.) The construction then straightforwardly implies that $e$ is injective, because no ``information'' is lost going from $p=((t_0,t_1,\ldots),(b_0,b_1,\ldots))$ to $e(p)$, all the $t_k$ and $b_k$ appearing somewhere in the sequences corresponding to $(e_0(p),e_1(p),e_2(p))$.

To prove that $e$ is close-expanding, we must show that if $|x-y|>\epsilon$, then $\|e(x)-e(y)\|_2>c\cdot \sqrt{\epsilon}$ for a fixed constant $c>0$. This is a consequence of four facts.
\begin{enumerate}
\item For $x,y\in[0,1]$, $|x-y|\leq \delta_3(x',y')$ holds, where $x'$ and $y'$ are representations in base $3$ of $x$ and $y$.
\item{ For $x,y\in \{0,1,2\}^\mathbb{N}$ and $x',y'\in I_{many}$ projecting to $x$ and $y$, $$\sqrt{\delta_3(x,y)}\leq 3\cdot \max(\delta_3(e_0(x'),e_0(y')),\delta_3(e_1(x'),e_1(y')),\delta_3(e_2(x'),e_2(y'))).$$}
\item On the image of $e$, the maximum metric is equivalent to the maximum of the ternary ultrametric on the coordinates. In particular, we have: 
$$\|e(x)-e(y)\|_\infty\geq 3^{-4}\cdot\max(\delta_3(e_0(x),e_0(y)),\delta_3(e_1(x),e_1(y)),\delta_3(e_2(x),e_2(y))).$$
\item The maximum metric is a lower bound for the Euclidean metric.

\end{enumerate}
The first fact is a direct consequence of the definitions. Statement number two is established by observing that if $x$ and $y$ differ at the $n$-th place, then, for some $i$, $e_i(x)$ and $e_i(y)$ will differ at worse at the $\lfloor n/2+1\rfloor$-th place. The third inequality is implied by the impossibility for the expansion in base $3$ of coordinates of points in the image of $e$ to have a $2$ at places numbered $3k+2$ for $k \in \mathbb{N}$. Number 4 is well-known. 

Combining the four inequalities sequentially, we obtain $\|e(x)-e(y)\|_2>3^{-5}\cdot \sqrt{\epsilon}$, so that $e$ is close expanding with constant $\frac{1}{243}$.
\end{proof}

\begin{lem}
\label{circleinsideparabola}
For $x\in [-r,r]$ and $r \in (0,\infty)$, $r-\sqrt{r^2-x^2}\geq \frac{x^2}{2r}$. In other words, a circle of radius $r$, tangent at $0$ with the parabola of equation $\frac{x^2}{2r}$ is completely above the parabola.
\end{lem}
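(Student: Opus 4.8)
The plan is to clear the square root and reduce the claim to a transparent polynomial inequality. Moving the radical to one side, the asserted inequality $r-\sqrt{r^2-x^2}\geq \frac{x^2}{2r}$ is equivalent to
$$r-\frac{x^2}{2r}\geq \sqrt{r^2-x^2}.$$
Since squaring preserves inequalities only between nonnegative quantities, the first step I would carry out is to check that the left-hand side above is nonnegative on the whole domain $x\in[-r,r]$. This is immediate: for such $x$ one has $x^2\leq r^2$, hence $\frac{x^2}{2r}\leq \frac{r}{2}\leq r$, so $r-\frac{x^2}{2r}\geq \frac{r}{2}>0$. The right-hand side $\sqrt{r^2-x^2}$ is nonnegative by definition, so both sides are nonnegative and squaring is a legitimate, equivalence-preserving step.

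With that justified, I would square both sides and expand the left-hand square:
$$\left(r-\frac{x^2}{2r}\right)^2=r^2-x^2+\frac{x^4}{4r^2}.$$
Subtracting $r^2-x^2$ from both sides, the inequality to be proved collapses to $\frac{x^4}{4r^2}\geq 0$, which holds for every real $x$ and every $r>0$, with equality exactly when $x=0$. Reversing the squaring step then yields the claim for all $x\in[-r,r]$.

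I expect no genuine obstacle: the only point requiring any care is the sign verification that enables the squaring step, and the leftover term $\frac{x^4}{4r^2}$ makes the positivity completely transparent. The geometric reformulation in the statement is just an interpretation I would mention rather than prove: $\sqrt{r^2-x^2}$ is the height of the lower arc of the circle of radius $r$ centred at $(0,r)$, so $r-\sqrt{r^2-x^2}$ measures how far that circle lies above the $x$-axis, and the inequality says this exceeds the parabola $\frac{x^2}{2r}$. As a sanity check one may note that the two curves agree to second order at the origin (both vanish there and both have curvature $1/r$), which is precisely why their difference is of order $x^4$.
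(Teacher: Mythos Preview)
Your argument is correct: the sign check allowing the squaring step is the only subtlety, and after squaring the inequality reduces to $\frac{x^4}{4r^2}\geq 0$, which settles it. The paper itself gives no proof of this lemma, treating it as an elementary calculus fact, so your proposal in fact supplies what the paper omits.
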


Let $T$ be the set $\{(\frac{x}{2\cdot243^2},e((x,y)))| x\in [0,1],y\in D\}$ and $\overline{T}$ its closure, which is bounded and hence compact. 
\begin{thm}
\label{counterexample}
The compact subset  $K=\overline{T}\cup \{0\}\times[0,1]^3 \cup \{1\}\times[0,1]^3$ considered as a subset of the four dimensional Euclidean space has $\Homo_1(\Rips(K,a))$ uncountably generated for $a\in [1-\frac{1}{2\cdot 243^2},1]$.
\end{thm}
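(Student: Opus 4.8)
The plan is to exploit the ``gap'' that the scaling factor $\frac{x}{2\cdot 243^2}$ opens in the first coordinate. First I would record the elementary edge-rules of $\Rips(K,a)$ for $a\in[1-\tfrac{1}{2\cdot243^2},1]$. Writing $\epsilon=\frac{1}{2\cdot 243^2}$, every point of $K$ has first coordinate in $[0,\epsilon]\cup\{1\}$, so $K$ splits into the part $K_{\le\epsilon}=\overline T\cup(\{0\}\times[0,1]^3)$ with first coordinate $\le\epsilon$ and the far cube $C_1=\{1\}\times[0,1]^3$. The map $(\frac{x}{2\cdot243^2},e(x,y))\mapsto(0,e(x,y))$ that sets the first coordinate to $0$ is $1$-Lipschitz, lands in the near cube $C_0=\{0\}\times[0,1]^3$, and moves each point a distance $\le\epsilon\le a$; I would use it to show that $\Rips(K_{\le\epsilon},a)$ deformation retracts onto $\Rips(C_0,a)$. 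Since a cube is convex, $\Rips(C_0,a)$ and $\Rips(C_1,a)$ are contractible. Thus both ``blobs'' are contractible, and all of the topology must come from the edges that jump across the gap from $K_{\le\epsilon}$ to $C_1$.

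Next I would pin down these gap-crossing simplices. A point $(\frac{x}{2\cdot243^2},e(x,y))$ is joined to $(1,v)\in C_1$ exactly when $(1-\frac{x}{2\cdot243^2})^2+\|e(x,y)-v\|^2\le a^2$; by Lemma \ref{circleinsideparabola} this forces $\|e(x,y)-v\|$ to stay below a reach radius of order $\sqrt{x}/243$, so each strand reaches only a small ball of $C_1$ around $e(x,y)$. Lemma \ref{existenceCloseExpanding} supplies the matching lower bound $\|e(x,y)-e(x',y')\|\ge\frac{1}{243}\sqrt{|x-x'|}$, pushing apart points whose parameters differ. At the left endpoint $a=1-\epsilon$ this is already decisive: only the parameter value $x=1$ reaches $C_1$, each far end $(\epsilon,e(1,y))$ is joined to the single point $(1,e(1,y))$ and to no other point of $C_1$ (the cross-edge to $(1,e(1,y'))$ has length $>a$ by injectivity of $e$), so the gap is crossed by a family of pairwise non-adjacent isolated edges, one for each $y\in D$.

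I would then feed this into a Mayer--Vietoris argument. Cover $\Rips(K,a)$ by an open thickening $U$ of $\Rips(K_{\le\epsilon},a)$ and an open thickening $V$ of $\Rips(C_1,a)$, arranged so that $U\cap V$ deformation retracts onto the set of midpoints of the gap-crossing simplices. As $U\simeq V\simeq *$, the sequence collapses to an isomorphism $\Homo_1(\Rips(K,a))\cong\widetilde{\Homo}_0(U\cap V)$, so it suffices to show that the gap-crossing region has uncountably many connected components. At $a=1-\epsilon$ this is immediate: there are $2^{\aleph_0}$ isolated crossing edges, so $\widetilde{\Homo}_0(U\cap V)$ is free abelian of uncountable rank.

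The main obstacle is to run this last step uniformly for every $a\in(1-\epsilon,1]$, where the reach radius $\sqrt{x}/243$ becomes positive, genuine $2$-simplices straddle the gap (for instance $\{(\epsilon,e(1,y)),(1,e(1,y)),(1,v)\}$ with $v$ close to $e(1,y)$), and one can no longer simply cut isolated edges. Here the point is to play the reach radius against the separation bound: I would show that if two gap-crossings attach to $C_1$-balls that meet, then their strand parameters must be close, and then combine the close-expanding estimate with Lemma \ref{circleinsideparabola} to prove that the resulting adjacency relation on crossings still has uncountably many classes. Concretely, the far ends $e(1,y)=(g(y),g(y),g(y))$ trace a Cantor set along the diagonal of $C_1$ indexed injectively by $y\in D$, and the calibration of $243^2$ against $243$ is exactly what keeps crossings attached to distinct Cantor branches in distinct components of $U\cap V$. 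Carrying out this combinatorial separation for all $a$ in the interval, rather than only at the transparent endpoint, is the technical heart of the proof; the contractibility of the cube's Rips complex and the verification of the deformation retraction onto $C_0$ are the remaining routine points.
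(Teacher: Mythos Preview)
Your Mayer--Vietoris framework is a legitimate alternative route, and the contractibility of $U$ and $V$ can indeed be justified (the straight-line contraction of a convex set to a point is $1$-Lipschitz, hence induces a finite chain of contiguous simplicial self-maps of the Rips complex from the identity to a constant; the retraction of $\Rips(K_{\le\epsilon},a)$ onto $\Rips(C_0,a)$ works the same way). But you explicitly leave the decisive step---showing that $U\cap V$ has uncountably many path components for \emph{every} $a$ in the interval, not just for the transparent endpoint $a=1-\epsilon$---unfinished. Your proposed plan for general $a$ (comparing reach radii against the close-expanding separation to bound an ``adjacency relation on crossings'') is vague and, as you yourself say, is the technical heart that you have not carried out. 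Incidentally, the far ends $e(1,y)$ do not lie on the diagonal of $C_1$; that description of the combinatorics is off.

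The paper avoids both Mayer--Vietoris and any global adjacency analysis with one sharp observation. For each fixed $a$, look only at edges $\{X,Y\}$ with $Y\in\overline T$ of first coordinate \emph{exactly} $1-a$ and $X\in C_1$ the point directly above it, so $d(X,Y)=a$. By construction no other point of $C_1$ is within $a$ of $Y$; and the paper shows, via Lemma~\ref{circleinsideparabola} together with the $\tfrac{1}{243}$-close-expanding bound, that no other $Y'\in\overline T$ is within $a$ of $X$. Hence each such ``rigid'' edge lies in no $2$-simplex at all. There are $2^{\aleph_0}$ of them (one for each $y\in D$ at the parameter $x=2\cdot243^2(1-a)$), any two can be closed to a $1$-cycle through the two cubes, and since rigid edges never occur in a boundary these cycles are independent in $\Homo_1$. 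This is simultaneously a complete direct proof and exactly the missing ingredient in your scheme: the open interiors of the rigid edges are isolated components of your $U\cap V$, giving uncountable $\widetilde{\Homo}_0(U\cap V)$ for every $a$ in the interval without any further combinatorics.
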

\begin{proof}
Let us fix an arbitrary $a\in [1-\frac{1}{2\cdot 243^2},1]$. A 1-simplex of length $a$ in $\Rips(K,a)$ between a point of $\{1\}\times[0,1]^3$ and a point of $\overline{T}$ will be called a rigid 1-simplex. 

Let $X\in \{1\}\times[0,1]^3$ and $Y\in \overline{T}$ be the two endpoints of a rigid 1-simplex. Since the distance from $Y$ to $X$ is equal to the distance from $Y$ to $\{1\}\times[0,1]^3$, there is only one line segment of length inferior or equal to $a$ between $Y$ and $\{1\}\times[0,1]^3$.

 Lemma \ref{circleinsideparabola} shows that if there is another point $Y'\in \overline{T}$ at distance less or equal to $a$ from $X$, the relation $\epsilon> \frac{l^2}{2}$ would hold between the length $\epsilon$ of the projection on the first coordinate of the vector $\vec{YY'}$ and the length $l$ of the projection of $\vec{YY'}$ on the $\{1\}\times[0,1]^3$ plane.  We would then have $\sqrt{2\cdot\epsilon}> l$, contradicting that the function $e$ is $\frac{1}{243}$-close-expanding and therefore $l\geq \frac{1}{243} \sqrt{2\cdot 243^2 \epsilon}$. Thus, a rigid 1-simplex is not contained in any non-trivial 2-simplex of $\Rips(K,a)$. 

For each $a$, we have $2^\omega$ rigid 1-simplexes coming from the choice of a point of $D$ to generate the points in $\overline{T}$ of first coordinate $1-a$. Moreover, any two rigid 1-simplex can be completed to a 1-cycle of $\Rips(K,a)$, using 1-simplexes of small length. This implies that $\Homo_1(\Rips(K,a))$ has uncountable rank. 

The set $\Rips(K,a)$ is compact because by Lemma \ref{existenceCloseExpanding}, it is a union of finitely many compact sets.
\end{proof}


\begin{thebibliography}{18}



\bibitem{chazal}
F. Chazal, V. de Silva, S. Oudot,
{\em  Persistence stability for geometric complexes},
arXiv:1207.3885 [math.AT], 2012.


\end{thebibliography}
\end{document}